\theoremstyle{plain}
\newtheorem{theorem}{Theorem}
\newtheorem{corollary}[theorem]{Corollary}
\newtheorem{lemma}[theorem]{Lemma}
\theoremstyle{definition}
\newtheorem{definition}{Definition}
\newtheorem{example}{Example}
\begin{document}

\title{P-positions in Modular Extensions to Nim}
\author{Tanya Khovanova \and Karan Sarkar}
\maketitle

\large

\begin{abstract}
In this paper, we consider a modular extension to the game of Nim, which we call $m$-Modular Nim, and explore its optimal strategy. In $m$-Modular Nim, a player can either make a standard Nim move or remove a multiple of $m$ tokens in total. We develop a winning strategy for all $m$ with $2$ heaps and for odd $m$ with any number of heaps. 
\end{abstract}

\section{Introduction}

Nim forms the foundation of the mathematical study of two-player strategy games. In his landmark $1901$ paper, \textit{Nim, a game with a complete mathematical theory}, Charles L. Bouton provided a solution to the game of Nim, essentially founding the field of Combinatorial Game Theory \cite{Nim}.

Since Bouton's discovery, many extensions or variants of Nim have been explored. Some variations that come to mind are Wythoff's Game, Poker Nim and Kayles. These variations often yield winning strategies that bare little resemblance to that of Nim \cite{WinningWays}.

Interestingly, very few if any of these variations use moves predicated upon modular congruence. In this paper, we explore a modular extension to Nim, which we call $m$-Modular Nim, in which moves are indeed predicated upon modular congruence are added to the traditional Nim moves. 

We start this paper with preliminaries in Section~\ref{prelim}. In Section~\ref{mModularNim}, we introduce the game of Modular Nim which is similar to Nim but in addition to Nim moves it allows players to remove a positive multiple of $m$ tokens total from the position. 

Section~\ref{2HeapOdd} considers $2$ heap Modular Nim for odd modular bases. Starting with an example for $m = 3$, we prove that the number of P-positions is finite and equal to $m$ for odd $m$. In Section~\ref{2HeapEven}, we expand our result to even values of $m$ by observing a self-similar structure in the set of P-positions. In Section~\ref{sec:explicit} we describe the P-positions explicitly.

In Section~\ref{sec:manyheaps} we describe P-positions in $m$-Modular Nim for any number of heaps and odd $m$.

\section{Preliminaries}\label{prelim}

We will be investigating the broad field of Combinatorial Game Theory (CGT). Roughly speaking, CGT concerns the study of winning strategies in two-player perfect information games. Our exploration of this large topic begins with some basic yet essential definitions \cite{WinningWays}.

\begin{definition}An \emph{impartial combinatorial game} is a two-player game where each player has both the same moves available at each and every point in the game and a complete set of information about the game and the potential moves.  \end{definition} 

This implies that no randomness such as rolling dice can exist.

\begin{definition}In \emph{normal play}, the first player unable to move is declared the loser. \end{definition} 

\begin{definition} We will call a position a \emph{terminal position} if no moves may be made from it. \end{definition}

In general, impartial combinatorial games are analyzed using the notion of P-positions and N-positions. This system of notation allows for games to be solved from the bottom up.

\begin{definition}A \emph{P-position} is a position from which the \emph{previous} player will win given perfect play. The set of P-positions is denoted as $\mathcal{P}$. \end{definition}

We can observe that all terminal positions are P-positions.

\begin{definition}An \emph{N-position} is a position from which the \emph{next} player will win given perfect play. The set of N-positions is denoted as $\mathcal{N}$.\end{definition}

Any position in the game is either a P-position or an N-position. All the moves from any P-position lead to an N-position. On the other hand, from any N-position, there exists some move to a P-position. These observations motivate the following theorem \cite{LessonsInPlay}.

\begin{theorem}Suppose that the positions of a finite impartial game can be partitioned into disjoint sets $\mathcal{A}$ and $\mathcal{B}$ such that:

\begin{enumerate}
\item Every move from a position in $\mathcal{A}$ is to a position in $\mathcal{B}$.
\item Every move from a position in $\mathcal{B}$ has at least one move to a position in $\mathcal{A}$.
\item All terminal positions are elements of $\mathcal{A}$.
\end{enumerate}

Then $\mathcal{A} = \mathcal{P}$ and $\mathcal{B} = \mathcal{N}$.
\end{theorem}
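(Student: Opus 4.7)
The plan is to use strong induction on a well-founded height function on game positions, leveraging the finiteness assumption to ensure every position admits only finitely many move sequences before termination. Specifically, for each position $p$ let $h(p)$ denote the length of the longest play starting from $p$; this is a non-negative integer, and $h(p)=0$ exactly when $p$ is terminal. I would then prove by induction on $h(p)$ the two implications: $p\in\mathcal{A}\Rightarrow p\in\mathcal{P}$ and $p\in\mathcal{B}\Rightarrow p\in\mathcal{N}$. Since $\mathcal{A}\cup\mathcal{B}$ and $\mathcal{P}\cup\mathcal{N}$ each partition the position set, these two implications together force equality of the respective classes.

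The base case handles $h(p)=0$. Then $p$ is terminal, so by hypothesis (3) we have $p\in\mathcal{A}$, and by the definition of normal play the mover loses immediately, so $p\in\mathcal{P}$, as desired. There is nothing to verify on the $\mathcal{B}$ side, since a terminal position cannot belong to $\mathcal{B}$. For the inductive step, suppose the claim is known for every position of height less than $h(p)>0$. If $p\in\mathcal{A}$, property (1) tells us every move from $p$ lands in $\mathcal{B}$; each such successor has strictly smaller height and, by the inductive hypothesis, is an N-position. Thus whichever move the current player chooses, the opponent inherits a winning N-position, so $p\in\mathcal{P}$. If $p\in\mathcal{B}$, property (2) supplies a move from $p$ to some $q\in\mathcal{A}$, which by induction is a P-position; the current player moves there, leaving the opponent in a losing position, so $p\in\mathcal{N}$.

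The only delicate point I anticipate is the justification of the well-founded induction itself. The statement says \emph{finite} impartial game, which I interpret as excluding infinite plays, making the game graph an acyclic digraph in which $h(p)$ is everywhere defined and finite. Once this is granted, the remainder is a mechanical unfolding of the recursive characterization of $\mathcal{P}$ and $\mathcal{N}$ encoded by properties (1)--(3), and no case analysis beyond the two displayed above is required.
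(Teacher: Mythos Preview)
Your argument is correct and is precisely the standard strong-induction proof of this classical characterization. Note, however, that the paper does not supply its own proof of this theorem: it is stated with a citation to \emph{Lessons in Play} and used as background. So there is no in-paper proof to compare against; your write-up simply fills in what the authors chose to quote from the literature, and it does so soundly.
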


\subsection{Nim}

Nim is the most fundamental impartial combinatorial game. It is one of the earliest \emph{take-away games} and is testament to the complexity that can arise from simple rules \cite{LessonsInPlay}.

\begin{definition}In the game of \emph{Nim}, each position consists of a set of heaps of tokens. In a move, a player must remove a positive number of tokens from a single heap. \end{definition}

In a Nim-like game, denote positions with $h$ heaps as ordered $h$-tuples. To describe the set of P-positions in Nim, we need to define the bitwise XOR operation.

\begin{definition} The \textit{bitwise XOR} of two numbers is calculated by writing both numbers in binary and adding them without carrying over. We will use the $\oplus$  symbol to denote the bitwise XOR operation. 
\end{definition}

The set of P-positions in Nim is well understood and summarized by the following theorem \cite{Nim}.

\begin{theorem}[Bouton's Theorem]In Nim, $(a_1, \ldots, a_n) \in \mathcal{P}$ if and only if \[\bigoplus_{i=1}^n a_i = 0.\] \end{theorem}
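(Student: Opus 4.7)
The plan is to apply the characterization theorem stated just above to the partition defined by the XOR invariant. Let $\mathcal{A}$ be the set of Nim positions $(a_1,\dots,a_n)$ with $\bigoplus_{i=1}^n a_i = 0$ and let $\mathcal{B}$ be its complement. I will verify the three hypotheses of that theorem in turn; the conclusion that $\mathcal{A}=\mathcal{P}$ will then be immediate.

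The terminal condition is the cheapest: the only terminal position in Nim is $(0,\dots,0)$, whose XOR is $0$, so it lies in $\mathcal{A}$. For condition (1), I would argue that any Nim move replaces a single entry $a_i$ by some $a_i' < a_i$, leaving the other heaps fixed; the new XOR equals the old XOR XOR $(a_i \oplus a_i')$, and since $a_i \neq a_i'$ we have $a_i \oplus a_i' \neq 0$, so starting from a position in $\mathcal{A}$ (XOR $=0$) we land in $\mathcal{B}$ (XOR $\neq 0$).

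The heart of the argument is condition (2), which demands the construction of an explicit winning move. Given a position in $\mathcal{B}$ with $s := \bigoplus_{i=1}^n a_i \neq 0$, I would let $k$ be the index of the highest bit set in $s$. Because that bit must come from an odd number of the $a_i$, at least one heap $a_j$ has its $k$-th bit equal to $1$. The key observation is then that $a_j \oplus s < a_j$, since replacing $a_j$ by $a_j \oplus s$ clears the $k$-th bit (its most significant altered bit) while possibly flipping only lower-order bits. Reducing heap $j$ from $a_j$ to $a_j \oplus s$ is therefore a legal Nim move, and the resulting XOR is $s \oplus s = 0$, placing us in $\mathcal{A}$.

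The step I expect to require the most care in writing up cleanly is condition (2), specifically the inequality $a_j \oplus s < a_j$; the rest is bookkeeping. Once all three conditions are verified, the characterization theorem gives $\mathcal{A}=\mathcal{P}$, which is exactly Bouton's Theorem.
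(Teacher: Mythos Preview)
Your proof is correct and is precisely the classical argument. Note, however, that the paper does not actually supply a proof of Bouton's Theorem: it is quoted in the preliminaries as a known result with a citation to Bouton's original paper, so there is no in-paper proof to compare against. That said, your argument dovetails nicely with the paper's setup, since you invoke exactly the characterization theorem (Theorem~1) that the paper states for this purpose; the verification of conditions (1)--(3) and the highest-bit trick for constructing the winning move are the standard ingredients, and you have them all in place.
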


\section{$m$-Modular Nim}\label{mModularNim}

We will now introduce a natural extension to Nim which we will subsequently discuss in detail. Our game involves loosening the restrictions on Nim moves with conditions based on modular congruence.

\begin{definition}In the game of \emph{$m$-Modular Nim}, each position consists of a set of heaps of tokens, like in Nim. However, we have two types of moves:

\begin{enumerate}[Type I.]
\item Remove a positive number of tokens from a single heap.
\item Remove $km$ tokens total where $k$ is a positive integer.
\end{enumerate}
\end{definition}

In our analysis of $m$-Modular Nim, we introduce some additional notation and positional functions for convenience.

\begin{definition}Let the \emph{heap-sum} of a position $A$ be the total number of tokens. We denote it as $|A|$. \end{definition}

We introduce a partial order on the set of positions to allow ourselves to speak more concisely about important concepts.

\begin{definition}If $A = \left(a_1, a_2, a_3, \ldots, a_k\right)$ and $B = \left(b_1, b_2, b_3, \ldots, b_k\right)$ are positions in $m$-Modular Nim such that $a_i \geq b_i$ for all integers $1 \leq i \leq k$, we say that $A$ \emph{dominates} $B$. Position $A$ \emph{strictly dominates} $B$ if $A$ dominates $B$ and $A$ is not equal to $B$, that is, there exists $i$ such that $a_i > b_i$. We denote domination as $A \succeq B$ and strict domination as $A \succ B$. 

Moreover if all members of set $\mathcal{S}$ dominate all members of set $\mathcal{T}$, we say that $\mathcal{S}$ dominates $\mathcal{T}$ or $\mathcal{S} \succeq \mathcal{T}$. Similarly, if all members of $\mathcal{S}$ strictly dominate all members of $\mathcal{T}$, we say that $\mathcal{S}$ strictly dominates $\mathcal{T}$ or $\mathcal{S} \succ \mathcal{T}$. \end{definition}

If a P-position $A$ dominates a P-position $B$, one might expect that then there exists an optimal game when $A$ occurs as a position in the game before $B$. This is true for a $2$-heap game, but is not true for a game with more heaps. 

For example, consider a 3 heap game of $4$-Modular Nim. The P-position $(1,2,2)$ dominates the P-position $(0,1,1)$, but the latter position can not be reached from the former position in any optimal play.

\begin{lemma}\label{DominateTypeII}
A Type II move from position $A$ to position $B$ exists, if and only if $|A| \equiv |B| \pmod{m}$ and $A \succ B$. \end{lemma}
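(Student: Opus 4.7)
The plan is to verify both directions directly from the definition of a Type II move, the only subtlety being that such a move may distribute its $km$ removed tokens across several heaps.

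For the forward direction, I would suppose a Type II move takes $A$ to $B$. By definition it removes $km$ tokens in total for some positive integer $k$, so $|A| - |B| = km$, which immediately gives $|A| \equiv |B| \pmod{m}$. Since tokens are only removed (never added) from any heap, we have $a_i \geq b_i$ for every $i$, i.e.\ $A \succeq B$. Because $k \geq 1$ at least one token is actually removed, so $B \neq A$ and hence $A \succ B$.

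For the reverse direction, I would assume $|A| \equiv |B| \pmod{m}$ and $A \succ B$. Strict domination implies $|A| > |B|$, and combined with the congruence this forces $|A| - |B| = km$ for some positive integer $k$. The proposed Type II move is the natural one: from each heap $i$ remove $a_i - b_i \geq 0$ tokens. The total number of tokens removed is $\sum_i (a_i - b_i) = |A| - |B| = km$, a positive multiple of $m$, so this qualifies as a Type II move, and it clearly produces position $B$.

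There is no serious obstacle here; the statement is essentially an unpacking of the definition. The only point that warrants mention is that the Type II rule as defined (``remove $km$ tokens total'') permits simultaneous removals from multiple heaps, which is exactly what makes the reverse direction work whenever $A \succ B$ differs in more than one coordinate.
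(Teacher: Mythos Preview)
Your proof is correct and follows essentially the same route as the paper's own argument: both directions are handled by directly unpacking the definition of a Type II move, with the reverse direction constructing the move by removing $a_i - b_i$ tokens from each heap. If anything, you are slightly more explicit than the paper in noting that strict domination forces $|A| > |B|$, which is what guarantees the integer $k$ is positive.
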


\begin{proof}
If a move from position $A$ to position $B$ exists then $A \succ B$. If in addition, this is a Type II move, the total number of tokens is decreased by a multiple of $m$ implying $|A| \equiv |B| \pmod{m}$.

On the other hand, suppose $|A| \equiv |B| \pmod{m}$ and $A \succ B$. Let $A = (a_1,a_2,\ldots,a_n)$ and $B = (b_1,b_2,\ldots,b_n)$. From the $i^\text{th}$ heap in $A$, take away $a_i - b_i$ tokens. Because $A \succ B$, we have that $a_i - b_i \geq 0$ implying that the move is well-defined. Moreover, the total number of tokens removed must be divisible by $m$ as $|A| \equiv |B| \pmod{m}$.
\end{proof}

\section{$2$ Heap $m$-Modular Nim for Odd $m$}\label{2HeapOdd}

Rather than dealing with any number of heaps, we will start with $2$ heaps and odd $m$.

\subsection{An Example: $m=3$.}

\begin{example}[$m = 3$]

The P-positions of $3$-Modular Nim are the ordered pairs:

\begin{center}
$(0,0),$ \\
$(1,1),$ \\
$(2,2).$
\end{center}

Let the specified set be $\mathcal{S}$. We can manually verify that no members of $\mathcal{S}$ are connected by a legal move. Thus, it suffices to show that a move from any position $(a,b) \notin \mathcal{S}$  to an element of $\mathcal{S}$ exists.

\begin{enumerate}
\item Suppose that $\min(a,b) < 3$, then a Type I move must exist. 
\item On the other hand, if $\min(a,b) \geq 3$, then $(a,b) \succ \mathcal{S}$. By Lemma \ref{DominateTypeII}, a Type II move must exist because all residue classes modulo $3$ are covered by $\mathcal{S}$.
\end{enumerate}
\end{example}

Figure~\ref{2HeapMod3} displays the P-positions in 3-Modular Nim on a coordinate grid. 

\begin{figure}[htbp]
\begin{center}
\begin{tikzpicture}[scale = 1]
{\filldraw[black] (0,0) circle (2pt) node[anchor=west] {(0,0)};
\filldraw[black] (1,1) circle (2pt) node[anchor=west] {(1,1)};
\filldraw[black] (2,2) circle (2pt) node[anchor=west] {(2,2)};}
{\draw[black, thick] (0,0) -- (2,2);}
\end{tikzpicture}
\end{center}
\caption{P-positions in $3$-Modular Nim.}
\label{2HeapMod3}
\end{figure}
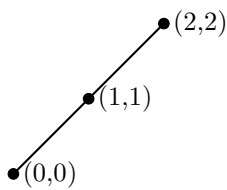

These patterns of P-positions for $m = 3$ suggests a similar structure exists for other integers. This motivates a generalization to all odd integers.

\begin{theorem}\label{Odd2HeapPPosition}For odd $m$, a position of $m$-Modular Nim with $2$ heaps is a P-position if and only it is of the form $(i,i)$ for integers $i$, where $0 \leq i < m$. \end{theorem}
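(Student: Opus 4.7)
The plan is to verify directly that the set $\mathcal{S} = \{(i,i) : 0 \leq i < m\}$ satisfies the three conditions of the characterization theorem from the preliminaries: it contains the unique terminal position $(0,0)$; no legal move connects two positions inside $\mathcal{S}$; and from every position outside $\mathcal{S}$ there is a move into $\mathcal{S}$. The oddness of $m$ will enter in precisely two places, both through the fact that $\gcd(2,m)=1$.

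First I would handle closure. A Type I move changes exactly one coordinate, so it leaves any $(i,i)$ with unequal coordinates and thus outside $\mathcal{S}$. A Type II move between $(i,i)$ and $(j,j)$ with $i>j$ removes $2(i-j)$ tokens, and legality requires $m \mid 2(i-j)$; since $m$ is odd this forces $m \mid (i-j)$, which together with $0 \leq i,j < m$ gives $i=j$, a contradiction. So no move connects two elements of $\mathcal{S}$.

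Next I would show that every $(a,b)\notin\mathcal{S}$ has a move into $\mathcal{S}$, splitting on $\min(a,b)$. If $\min(a,b) < m$, let $c=\min(a,b)$; then $(c,c)\in\mathcal{S}$, and since $(a,b)\neq(c,c)$ (otherwise $(a,b)\in\mathcal{S}$) the larger coordinate can be reduced to $c$ by a Type I move. If instead $\min(a,b)\geq m$, then $(a,b)$ strictly dominates every element of $\mathcal{S}$, so by Lemma \ref{DominateTypeII} a Type II move from $(a,b)$ to $(i,i)$ exists exactly when $a+b \equiv 2i \pmod{m}$. Because $m$ is odd, $2$ is a unit modulo $m$, so there is a unique $i\in[0,m)$ satisfying this congruence, and the corresponding Type II move into $\mathcal{S}$ is legal.

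I do not expect a genuine obstacle here; the only subtlety is making sure both uses of the oddness of $m$ are isolated cleanly, namely the invertibility of $2$ modulo $m$ which simultaneously rules out intra-$\mathcal{S}$ Type II moves and guarantees a Type II escape from dominating positions. Once those two observations are in place, the three conditions of the characterization theorem are immediate and the identification $\mathcal{S}=\mathcal{P}$ follows.
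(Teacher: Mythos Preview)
Your proposal is correct and follows essentially the same approach as the paper's own proof: verify closure of $\mathcal{S}$ under moves (Type I trivially, Type II via $\gcd(2,m)=1$), then split on $\min(a,b)<m$ versus $\min(a,b)\geq m$ to exhibit a move into $\mathcal{S}$ from any outside position. Your presentation is slightly more explicit in isolating where oddness is used and in citing the terminal-position condition, but the argument is the same.
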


\begin{proof}
Let the set of specified positions be $\mathcal{S}$. We first prove that no move exists between any of the specified positions. Because no distinct positions have a heap size in common, no Type I move exists. Suppose that a Type II move exists between distinct positions $(i,i)$ and $(j,j)$ where $0 \leq i,j < m$. By Lemma~\ref{DominateTypeII}, we must have that:
\[2i \equiv 2j \pmod{m}. \]

Because $\gcd(2,m) = 1$, we may divide both sides by 2:
\[i \equiv j \pmod{m}. \]

Because $0 \leq i,j < m$, we have that $i = j$, contradicting the assumption of distinctness. 

Now we must prove that for any position $(a,b) \notin \mathcal{S}$ there is a move to a position in $\mathcal{S}$. 

\begin{enumerate}
\item Suppose that $\min(a,b) < m$, then a Type I move must exist.
\item On the other hand, if $\min(a,b) \geq m$, then $(a,b) \succ \mathcal{S}$. Therefore, a Type II move must exist because all residue classes modulo $m$ are covered by $\mathcal{S}$.
\end{enumerate}
\end{proof}

\section{$2$ Heap $m$-Modular Nim for Any $m$}\label{2HeapEven}

\subsection{Another Example: $m=6$.}

We consider an example of $m$-Modular Nim, where $m$ is an even integer: $m = 6$.

\begin{example}[$m = 6$] \label{m=6}
We claim that the set of P-positions for $m = 6$ is the following set:

\begin{center}
$(0,0)$ \\
$(1,1)$ \\
$(2,2)$
\begin{align*}
(3,4) & \hspace{40pt} (4,3) \\
(5,6) & \hspace{40pt} (6,5) \\
(7,8) & \hspace{40pt} (8,7).
\end{align*}
\end{center}

Let the specified set be $\mathcal{S}$. As before, we can manually verify that no Type I or Type II moves connect any two members of $\mathcal{S}$. 

To show that $\mathcal{S}$ is the set of all P-positions, we must now show that for any position $(a,b) \notin \mathcal{S}$ there is a move to an element of $\mathcal{S}$.

\begin{enumerate}
\item If $\min(a,b) \leq 8$, then we may reach a member of $\mathcal{S}$ by removing the necessary number of tokens from the larger heap using a Type I move. 

\item On the other hand, if $\min(a,b) > 8$, we have that $(a,b) \succ \mathcal{S}$. Because $\mathcal{S}$ contains positions with each possible total heap-sum modulo $6$, there must exist a Type II move.
\end{enumerate}
\end{example}

We can see the existence of three distinct groups of P-positions for the case where $m = 6$. The first group has both heaps equal in size and is the same as P-positions for 3-Modular Nim. The second and third group can be viewed as the P-positions of 3-Modular Nim shifted by $(3,4)$ and $(4,3)$ respectively. 

This idea is further elucidated by Figure~\ref{2HeapMod6}. The red sections indicate the locations of the second and third groups that are shifted replicas of Figure~\ref{2HeapMod3}. This nesting structure is essential in finding a formula for the set of P-positions.

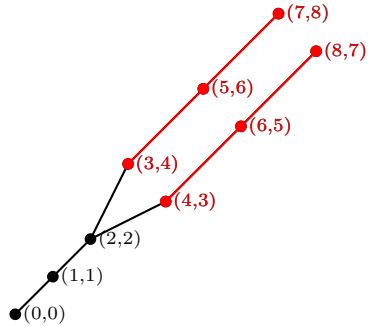
\begin{figure}[htbp]
\begin{center}
\begin{tikzpicture}[scale = 0.5]
\scriptsize
\filldraw[black] (0,0) circle (4pt) node[anchor=west] {(0,0)};
\filldraw[black] (1,1) circle (4pt) node[anchor=west] {(1,1)};
\filldraw[black] (2,2) circle (4pt) node[anchor=west] {(2,2)};
\draw[black, thick] (0,0) -- (2,2);
{
\filldraw[black] (3,4) circle (4pt) node[anchor=west] {(3,4)};
\filldraw[black] (4,3) circle (4pt) node[anchor=west] {(4,3)};
\filldraw[black] (5,6) circle (4pt) node[anchor=west] {(5,6)};
\filldraw[black] (6,5) circle (4pt) node[anchor=west] {(6,5)};
\filldraw[black] (7,8) circle (4pt) node[anchor=west] {(7,8)};
\filldraw[black] (8,7) circle (4pt) node[anchor=west] {(8,7)};
\draw[black, thick] (2,2) -- (3,4);
\draw[black, thick] (2,2) -- (4,3);
\draw[black, thick] (3,4) -- (7,8);
\draw[black, thick] (4,3) -- (8,7);

{
\filldraw[red] (3,4) circle (4pt) node[anchor=west] {(3,4)};
\filldraw[red] (4,3) circle (4pt) node[anchor=west] {(4,3)};
\filldraw[red] (5,6) circle (4pt) node[anchor=west] {(5,6)};
\filldraw[red] (6,5) circle (4pt) node[anchor=west] {(6,5)};
\filldraw[red] (7,8) circle (4pt) node[anchor=west] {(7,8)};
\filldraw[red] (8,7) circle (4pt) node[anchor=west] {(8,7)};
\draw[red, thick] (3,4) -- (7,8);
\draw[red, thick] (4,3) -- (8,7);
}

}
\end{tikzpicture}

\end{center}
\caption {P-positions for $6$-Modular Nim.}
\label{2HeapMod6}
\end{figure}

To make this nesting pattern more clear, we display the set of P-positions in $12$-Modular Nim in Figure~\ref{2HeapMod12}. The red section indicates one of the two embedded shifted replicas of P-positions in 6-Modular Nim as shown in Figure~\ref{2HeapMod6} whereas the green section shows the doubly nested and shifted copy of P-positions in $3$-Modular Nim. Note that because we are chiefly concerned with the recursive structure, the individual labels have been removed.

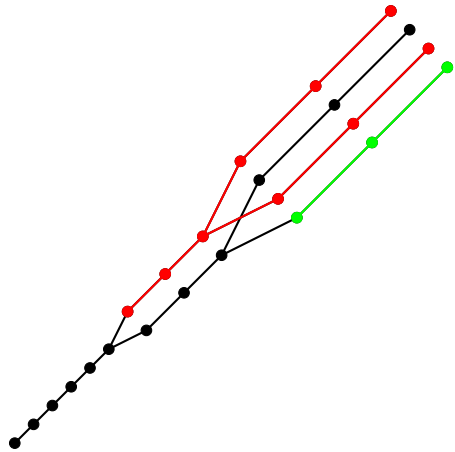
\begin{figure}[htbp]
\centering
\begin{tikzpicture}[scale = 0.25]
\filldraw[black] (0,0) circle (8pt) ;
\filldraw[black] (1,1) circle (8pt) ;
\filldraw[black] (2,2) circle (8pt) ;
\filldraw[black] (3,3) circle (8pt) ;
\filldraw[black] (4,4) circle (8pt) ;
\filldraw[black] (5,5) circle (8pt) ;
\filldraw[black] (6,7) circle (8pt) ;
\filldraw[black] (7,6) circle (8pt) ;
\filldraw[black] (8,9) circle (8pt) ;
\filldraw[black] (9,8) circle (8pt) ;
\filldraw[black] (10,11) circle (8pt) ;
\filldraw[black] (11,10) circle (8pt) ;
\filldraw[black] (12,15) circle (8pt) ;
\filldraw[black] (13,14) circle (8pt) ;
\filldraw[black] (14,13) circle (8pt) ;
\filldraw[black] (15,12) circle (8pt) ;
\filldraw[black] (16,19) circle (8pt) ;
\filldraw[black] (17,18) circle (8pt) ;
\filldraw[black] (18,17) circle (8pt) ;
\filldraw[black] (19,16) circle (8pt) ;
\filldraw[black] (20,23) circle (8pt) ;
\filldraw[black] (21,22) circle (8pt) ;
\filldraw[black] (22,21) circle (8pt) ;
\filldraw[black] (23,20) circle (8pt) ;
\draw[black, thick] (0,0) -- (5,5);
\draw[black, thick] (5,5) -- (6,7);
\draw[black, thick] (5,5) -- (7,6);
\draw[black, thick] (6,7) -- (10,11);
\draw[black, thick] (7,6) -- (11,10);
\draw[black, thick] (10,11) -- (12,15);
\draw[black, thick] (10,11) -- (14,13);
\draw[black, thick] (11,10) -- (15,12);
\draw[black, thick] (11,10) -- (13,14);
\draw[black, thick] (12,15) -- (20,23);
\draw[black, thick] (13,14) -- (21,22);
\draw[black, thick] (14,13) -- (22,21);
\draw[black, thick] (15,12) -- (23,20);

{
\filldraw[red] (6,7) circle (8pt) ;
\filldraw[red] (8,9) circle (8pt) ;
\filldraw[red] (10,11) circle (8pt) ;
\filldraw[red] (12,15) circle (8pt) ;
\filldraw[red] (14,13) circle (8pt) ;
\filldraw[red] (16,19) circle (8pt) ;
\filldraw[red] (18,17) circle (8pt) ;
\filldraw[red] (20,23) circle (8pt) ;
\filldraw[red] (22,21) circle (8pt) ;
\draw[red, thick] (6,7) -- (10,11);
\draw[red, thick] (10,11) -- (12,15);
\draw[red, thick] (10,11) -- (14,13);
\draw[red, thick] (12,15) -- (20,23);
\draw[red, thick] (14,13) -- (22,21);
}

>{
\filldraw[green] (15,12) circle (8pt) ;
\filldraw[green] (19,16) circle (8pt) ;
\filldraw[green] (23,20) circle (8pt) ;
\draw[green, thick] (15,12) -- (23,20);
}

\end{tikzpicture}
 \caption{P-positions for 12-Modular Nim.}
\label{2HeapMod12}
\end{figure}

\subsection{Potential P-positions}\label{sec:potentialpositions}

In order to formalize the previous notion of nesting, we will recursively define the positions that we later prove to be P-positions for any value of $m$.

\begin{definition}  
\[\mathcal{A}_m = \left \{ 
\begin{array}{ll}
\hspace{5pt} (i,i) \text{ where } 0 \leq i < m & \hspace{112pt} : m \text{ is odd} \\
\hspace{5pt} (i,i) \text{ where } 0 \leq i < \frac{m}{2} &  \hspace{112pt}: m \text{ is even.}
\end{array}
\right. \]
We will refer to any member of $\mathcal{A}_m$ as a \emph{trunk position}. 
\end{definition}

Note that the P-positions in $6$-Modular Nim (from Example \ref{m=6}) that appear before the ``splitting'' are trunk positions.

\begin{definition}
\[ \mathcal{B}_m = \left \{ 
\begin{array}{ll}
\hspace{5pt} \varnothing &: m \text{ is odd} \\
\begin{array}{l}\left(2a + \frac{m}{2} + 1, 2b + \frac{m}{2}\right) \\ \left(2a + \frac{m}{2}, 2b + \frac{m}{2} + 1\right) \end{array} 
\text{ where } (a,b) \in  \mathcal{B}_{\frac{m}{2}}\cup \mathcal{A}_{\frac{m}{2}} &: m \text{ is even.}
\end{array}
\right. \]
We will refer to any member of $\mathcal{B}_m$ as a \emph{branch position}.
\end{definition}

Note that the P-positions in $6$-Modular Nim (from Example \ref{m=6}) that appear after the ``splitting'' are branch positions. Also note that both positions in $\mathcal{B}_m $ generated from $(a,b)$ have the same sum of coordinates.

We seek to prove that the set of P-positions in $m$-Modular Nim is $\mathcal{Q}_{m} = \mathcal{A}_{m} \cup \mathcal{B}_{m}$.

\begin{definition}  We call the set of positions $\mathcal{Q}_{m}$ \emph{potential positions}. \end{definition} 

Note that the recursion that builds $\mathcal{Q}_{2m}$ from $\mathcal{Q}_{m}$ allows us to provide a recursion for the number of elements in $\mathcal{Q}_{m}$:
\[|\mathcal{Q}_{2m}| = m+ 2|\mathcal{Q}_{m}|.
\]

Because of the recursive doubling involved in generating trunk and branch positions, it is natural to consider the number of times that this doubling can occur. In other words, we wish to count the greatest power of $2$ that divides $m$.

\begin{definition}Define the \emph{$2$-adic order} of $m$ as the highest power of $2$ that divides $m$. We denote this arithmetic function as $\nu_2(m)$ \cite{NumberTheory}.\end{definition}

Now we prove a series of preliminary results. 
The following lemma shows that a number may appear in a potential position in a particular coordinate at most once. 

\begin{lemma} \label{2HeapTypeI} 
If $(a,b)$ and $(a,c) \in \mathcal{Q}_m$, then $b = c$. \end{lemma}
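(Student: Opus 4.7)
The plan is to proceed by strong induction on $m$, exploiting the recursive definition of $\mathcal{Q}_m$. For the base case of odd $m$, we have $\mathcal{Q}_m = \mathcal{A}_m = \{(i,i) : 0 \le i < m\}$, so sharing a first coordinate $a$ forces $b = a = c$ immediately.

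For the inductive step with $m$ even, the main idea is to partition $\mathcal{Q}_m$ into pieces according to which coordinates are possible, and show that within each piece the first coordinate is injective. First I would observe a coarse split by size: any trunk position $(i,i) \in \mathcal{A}_m$ has first coordinate in $\{0,1,\dots,m/2 - 1\}$, whereas any branch position in $\mathcal{B}_m$ has first coordinate of the form $2a + m/2$ or $2a + m/2 + 1$ with $a \ge 0$, hence at least $m/2$. So a trunk and a branch position cannot share a first coordinate, and within the trunk the claim is obvious. It remains to handle two branch positions sharing a first coordinate.

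The key observation for the branch case is a parity argument: the two subfamilies of $\mathcal{B}_m$ produce first coordinates $2a + m/2 + 1$ and $2a + m/2$, which have opposite parity. Hence the parity of a given first coordinate in $\mathcal{B}_m$ determines which subfamily the position comes from, and then $a$ is determined from the first coordinate. If both $(2a + m/2 + 1, 2b_1 + m/2)$ and $(2a + m/2 + 1, 2b_2 + m/2)$ are in $\mathcal{B}_m$, then $(a, b_1)$ and $(a, b_2)$ both lie in $\mathcal{Q}_{m/2}$; by the induction hypothesis applied to $m/2$, we conclude $b_1 = b_2$, and hence the second coordinates $2b_1 + m/2$ and $2b_2 + m/2$ are equal. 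The other subfamily is symmetric.

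The step I expect to be most delicate is verifying that the size split between trunk and branch is actually clean — i.e., that no branch position sneaks into the interval $[0, m/2)$ in its first coordinate — and that the parity dichotomy within $\mathcal{B}_m$ truly separates the two generators. Once these two disjointness facts are in hand, the induction runs with no further surprises, since strong induction on $m$ (with $m/2 < m$) lets us freely appeal to the lemma at the smaller modulus.
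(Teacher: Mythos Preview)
Your proposal is correct and follows essentially the same approach as the paper: both proceed by induction (you on $m$, the paper on the $2$-adic order of $m$, which is equivalent here), split into the same three cases (trunk--trunk, trunk--branch, branch--branch), and resolve the branch--branch case via the identical parity argument that recovers $(a',b')\in\mathcal{Q}_{m/2}$ and invokes the inductive hypothesis. The only cosmetic difference is that the paper phrases the trunk--branch separation in terms of domination by $(m/2,m/2)$ rather than your coordinate threshold at $m/2$, but these are the same observation.
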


\begin{proof} 
We proceed by induction on the $2$-adic order of $k$. For the base case, suppose that $k$ is odd. From the definition of $\mathcal{Q}_k$, where $k$ is odd, no two distinct positions share a heap size, because all positions are of the form $(i,i)$ where $0 \leq i < k$.

For the inductive step, suppose that the lemma is true for $\mathcal{Q}_k$. We wish to prove that if $(a,b)$ and $(a,c) \in \mathcal{Q}_{2k}$, then $b = c$. We will split this into cases:

\begin{enumerate}
\item Suppose that both $(a,b)$ and $(a,c)$ are trunk positions. No two trunk positions share a heap size because they are of the form $(i,i)$ for different $i$.
\item Suppose one of the positions $(a,b)$ and $(a,c)$ is a trunk position and the other is a branch position. Any trunk position in $\mathcal{Q}_k$ is strictly dominated by $(k,k)$ and any branch position strictly dominates $(k,k)$. Therefore, a branch position and a trunk position cannot share a coordinate.
\item Suppose that both $(a,b)$ and $(a,c)$ are elements of $\mathcal{B}_{2k}$. If $a$ has the same parity as $k$, then $a = 2a^\prime + k$. Thus, we may write $b = 2b^\prime + k + 1$ and $c = 2c^\prime + k + 1$ where $\left(a^\prime, b^\prime\right)$ and $\left(a^\prime, c^\prime\right)$  are in $\mathcal{Q}_{k}$. By the inductive hypothesis, $b^\prime = c^\prime$. Therefore, $b = c$.

A similar symmetric argument works when $a$ has the opposite parity as $k$.
\end{enumerate}
Thus, we are done by induction. 
\end{proof}

We can strengthen Lemma~\ref{2HeapTypeI} by showing that the set of integers allowed to be a coordinate of a potential position only consists of consecutive numbers.

\begin{lemma}\label{2HeapPPositionCount}
Integers between $0$ and $k\left(\frac{\nu_2(k)}{2} + 1\right) -1$ inclusive form the set of all first coordinates in the set of potential positions $\mathcal{Q}_k$. By symmetry the same is true for the second coordinate.\end{lemma}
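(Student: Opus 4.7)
The plan is to prove the claim by induction on $\nu_2(k)$, paralleling the recursive construction of $\mathcal{Q}_k$. For the base case $\nu_2(k) = 0$ (that is, $k$ odd), we have $\mathcal{Q}_k = \mathcal{A}_k = \{(i,i) : 0 \leq i < k\}$, so the set of first coordinates is exactly $\{0, 1, \ldots, k-1\}$, matching the formula since $k(0/2 + 1) - 1 = k - 1$.

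For the inductive step, suppose the claim holds whenever the $2$-adic order is less than $n$, and take $k$ with $\nu_2(k) = n \geq 1$. Write $k = 2k'$ with $\nu_2(k') = n - 1$, and set $N = k'\bigl(\tfrac{\nu_2(k')}{2} + 1\bigr)$. By the inductive hypothesis together with Lemma~\ref{2HeapTypeI}, the first coordinates of $\mathcal{Q}_{k'}$ fill exactly $\{0, 1, \ldots, N - 1\}$ and $|\mathcal{Q}_{k'}| = N$. I would then decompose $\mathcal{Q}_k = \mathcal{A}_k \cup \mathcal{B}_k$: the trunk part contributes first coordinates $\{0, 1, \ldots, k' - 1\}$, while each $(a, b) \in \mathcal{Q}_{k'}$ generates two branch positions with first coordinates $2a + k'$ and $2a + k' + 1$. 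As $a$ ranges over $\{0, 1, \ldots, N - 1\}$, these two families interlace by parity to fill the block $\{k', k' + 1, \ldots, k' + 2N - 1\}$. Combining, the first coordinates of $\mathcal{Q}_k$ form $\{0, 1, \ldots, k' + 2N - 1\}$, and a direct check gives $k' + 2N = k'(\nu_2(k') + 3) = k\bigl(\tfrac{\nu_2(k)}{2} + 1\bigr)$, closing the induction.

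The main subtlety lies in simultaneously confirming that no first coordinate is repeated and that the interval has no gaps. Distinctness is handed to us by Lemma~\ref{2HeapTypeI}, which forces the map $(a, b) \mapsto a$ on $\mathcal{Q}_k$ to be injective. The absence of gaps then reduces to a counting argument: since the recursion $|\mathcal{Q}_k| = k' + 2|\mathcal{Q}_{k'}| = k' + 2N$ matches exactly the width of the target interval, every integer in $\{0, 1, \ldots, k' + 2N - 1\}$ must actually occur. Finally, because the recursive construction of $\mathcal{Q}_k$ is symmetric under swapping the two coordinates (the trunk positions are diagonal, and the two families defining $\mathcal{B}_k$ are swaps of one another), the analogous statement for second coordinates follows immediately.
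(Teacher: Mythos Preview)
Your proof is correct and follows essentially the same route as the paper: induction on $\nu_2(k)$, handling the base case via $\mathcal{Q}_k=\mathcal{A}_k$ for odd $k$, and in the inductive step splitting $\mathcal{Q}_{2k'}$ into trunk first coordinates $\{0,\ldots,k'-1\}$ and branch first coordinates $\{2a+k',\,2a+k'+1\}$ which interlace by parity to fill $\{k',\ldots,k'+2N-1\}$, followed by the same algebraic verification. Your closing paragraph invoking Lemma~\ref{2HeapTypeI} and a counting argument is correct but not needed for the lemma as stated, since the parity interleaving already shows the set of first coordinates equals the interval; distinctness of first coordinates is a separate fact (namely Lemma~\ref{2HeapTypeI} itself) rather than part of this statement.
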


\begin{proof}
As before, we induct on the $2$-adic order of $k$. Our base case is the odd integers. In Theorem \ref{Odd2HeapPPosition}, we defined the set of potential positions for odd integers $k$ to be all $(i,i)$ where $0 \leq i < k$. This concludes the base case, as $\nu_2(k)=0$.

For the inductive step, consider $\mathcal{Q}_{2k}$. For trunk positions, the first coordinate ranges over all numbers from $0$ to $k - 1$ inclusive. For branch positions, it ranges over all numbers of the form $2a+k$ and $2a+k+1$, where $a$ is a first coordinate of an element of $\mathcal{Q}_k$. 

By induction, $a$ is between 0 and $k\left(\frac{\nu_2(k)}{2} + 1\right) -1$ inclusive. Thus the new branch positions for $2k$ have first coordinates in the range between $k$ and 

\begin{align*}
2\left(k\left(\frac{\nu_2(k)}{2} + 1\right) -1 \right) + k + 1 &= \\
2k\left(\frac{\nu_2(k)}{2} + 1\right) + k - 1 &= \\
2k\left(\frac{\nu_2(k)}{2} + 1 + \frac{1}{2} \right) -1 &= \\
2k\left(\frac{\nu_2(2k)}{2} + 1  \right) -1.
\end{align*}

Therefore, an integer is between $0$ and $2k\left(\frac{\nu_2(k)}{2} + 1\right) -1$ inclusive if and only if it appears as a first coordinate in the set of potential positions $\mathcal{Q}_{2k}$, completing the induction.
\end{proof}

\begin{corollary}\label{thm:totalcount}
\[|\mathcal{Q}_m| = m\left(\frac{\nu_2(m)}{2} + 1\right).\] \end{corollary}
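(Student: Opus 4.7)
The proof is essentially immediate from the two lemmas just proved, so the plan is short. Lemma~\ref{2HeapTypeI} shows that the map $\mathcal{Q}_m \to \mathbb{Z}_{\geq 0}$ sending a potential position to its first coordinate is injective, since any two positions sharing a first coordinate must be equal. Lemma~\ref{2HeapPPositionCount} identifies the image of this map: it is exactly the set of integers from $0$ to $m\left(\frac{\nu_2(m)}{2} + 1\right) - 1$.

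Therefore, my proof will simply observe that $|\mathcal{Q}_m|$ equals the cardinality of the set of first coordinates, which by the two lemmas is a set of $m\left(\frac{\nu_2(m)}{2} + 1\right)$ consecutive integers starting at $0$. This yields the claimed count. No induction or case analysis is needed at this stage, since all the recursive work has already been absorbed into Lemma~\ref{2HeapPPositionCount}.

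The only step that requires any care is noting that the bijection between $\mathcal{Q}_m$ and its set of first coordinates is well defined in both directions: injectivity comes from Lemma~\ref{2HeapTypeI}, and surjectivity onto $\{0, 1, \ldots, m(\nu_2(m)/2 + 1) - 1\}$ comes from Lemma~\ref{2HeapPPositionCount}. There is no real obstacle; the corollary is essentially a restatement of those two results combined. One could alternatively verify the formula directly via the recursion $|\mathcal{Q}_{2m}| = m + 2|\mathcal{Q}_m|$ noted earlier, checking that $2m\left(\frac{\nu_2(2m)}{2} + 1\right) = m + 2m\left(\frac{\nu_2(m)}{2} + 1\right)$, but the approach via the two lemmas is cleaner.
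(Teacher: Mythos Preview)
Your proposal is correct and matches the paper's approach exactly: the paper states the corollary immediately after Lemma~\ref{2HeapPPositionCount} and notes that Lemmas~\ref{2HeapTypeI} and~\ref{2HeapPPositionCount} together determine the count, which is precisely the injectivity-plus-image argument you spell out.
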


With Lemma~\ref{2HeapTypeI} and Lemma~\ref{2HeapPPositionCount}, we can now explicitly determine the number of potential positions.

The following lemma shows that two potential positions that share a remainder modulo $m$ have the same sum.

\begin{lemma}\label{2HeapTypeII} If $(a,b)$ and $(c,d) \in \mathcal{Q}_k$ and $a + b \equiv c + d \pmod{k}$, then $a + b = c + d$. \end{lemma}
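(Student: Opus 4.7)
The plan is to induct on the $2$-adic order $\nu_2(k)$, matching the pattern used in Lemmas~\ref{2HeapTypeI} and~\ref{2HeapPPositionCount}. For the base case, $k$ is odd, so every potential position is of the form $(i,i)$ with $0\le i<k$; thus $a+b=2i$ and $c+d=2j$ for some $i,j$ in that range, and the congruence $2i\equiv 2j\pmod k$ together with $\gcd(2,k)=1$ yields $i\equiv j\pmod k$, hence $i=j$, as in the proof of Theorem~\ref{Odd2HeapPPosition}.

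For the inductive step, assume the lemma for $\mathcal{Q}_k$ and consider two potential positions in $\mathcal{Q}_{2k}$. I would split by whether each position is a trunk or branch position. The crucial preliminary observation is a \emph{parity} one: any trunk position $(i,i)\in\mathcal{A}_{2k}$ has even heap-sum $2i$, whereas any branch position in $\mathcal{B}_{2k}$ has heap-sum $(2a'+\tfrac{2k}{2}+1)+(2b'+\tfrac{2k}{2}) = 2(a'+b')+2k+1$, which is odd. Since the modulus $2k$ is even, congruence mod $2k$ preserves parity, so the mixed case (one trunk, one branch) cannot satisfy the hypothesis and is vacuous.

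In the trunk--trunk case, the sums $2i$ and $2j$ both lie in $[0,2k)$ and are congruent mod $2k$, so they are equal. In the branch--branch case, write $a+b = 2(a_1+b_1)+2k+1$ and $c+d = 2(c_1+d_1)+2k+1$ with $(a_1,b_1),(c_1,d_1)\in\mathcal{Q}_k$ coming from the recursive construction of $\mathcal{B}_{2k}$. The congruence $a+b\equiv c+d\pmod{2k}$ simplifies to $2(a_1+b_1)\equiv 2(c_1+d_1)\pmod{2k}$, i.e.\ $a_1+b_1\equiv c_1+d_1\pmod k$. By the inductive hypothesis applied to $(a_1,b_1)$ and $(c_1,d_1)$ in $\mathcal{Q}_k$, we conclude $a_1+b_1=c_1+d_1$, and therefore $a+b=c+d$.

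The main obstacle I anticipate is purely bookkeeping: correctly tracking the four sign/parity choices in the definition of $\mathcal{B}_{2k}$ so that the reduction to $\mathcal{Q}_k$ is clean. The parity observation is what makes the argument work without a case explosion, since it eliminates the trunk--branch case outright, and after that the algebra reduces the branch--branch case to the inductive hypothesis on a modulus half as large.
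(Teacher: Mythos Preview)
Your proposal is correct and follows essentially the same approach as the paper's proof: induction on $\nu_2(k)$, the same parity observation to dismiss the trunk--branch case, and the same reduction of the branch--branch case to the inductive hypothesis via the heap-sum formula $2(a'+b')+2k+1$. The only cosmetic difference is that in the trunk--trunk case you argue directly that two elements of $[0,2k)$ congruent mod $2k$ must coincide, whereas the paper first cancels the factor of $2$; both arguments are equally valid.
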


\begin{proof}
We proceed by induction on $\nu(k)$. For the base case, suppose that $k$ is odd. All potential positions are of the
from $(i,i)$ where $0 \leq i < k$. Let $(a,b) = (m,m)$ and $(c,d) = (n,n)$. We can rewrite our condition as:
\[2m \equiv 2n \pmod{k}. \]

Because $\gcd(2,k) = 1$, we may divide both sides by $2$:
\[m \equiv n \pmod{k}. \]

Because $0 \leq m,n < k$, we have that $m = n$. Thus, $a + b = c + d$ where $k$ is odd. 

For the inductive hypothesis, assume the lemma is true for $k$. We seek to now prove the corresponding statement for $2k$. We will divide this into cases:

\begin{enumerate}
\item Suppose that both $(a,b)$ and $(c,d)$ are trunk positions in $\mathcal{Q}_{2k}$. Because trunk positions are of the from $(i,i)$ where $0 \leq i < k$, we have that  $(a,b) = (m,m)$ and $(c,d) = (n,n)$ where $0 \leq m,n < k$. We can rewrite our condition as:
\[2m \equiv 2n \pmod{2k}. \]

Because $\gcd(2,2k) = 2$, we may divide both sides by $2$ only if we divide the modular base by $2$ as well:
\[m \equiv n \pmod{k}. \]

Because $0 \leq m,n < k$, this implies that $m = n$, finishing this case. 

\item Suppose one of $(a,b)$ and $(c,d)$ is a trunk position and the other is a branch position in $\mathcal{Q}_{2k}$. The heap-sum of a trunk position is even and the heap-sum of a branch position is odd. Therefore, they cannot have the same remainder modulo an even number $2k$.

\item Suppose that both $(a,b)$ and $(c,d)$ are branch positions in $\mathcal{Q}_{2k}$. Therefore,  $(a,b)$ is equal to $(2a^\prime + k, 2b^\prime + k + 1)$ or $(2a^\prime + k + 1, 2b^\prime + k)$ where $\left(a^\prime, b^\prime\right)$ is a position in $\mathcal{Q}_{2k}$. In either case, 
\[a + b = 2a^\prime + 2b^\prime + 2k + 1.\]

Similarly,
\[c + d = 2c^\prime + 2d^\prime + 2k + 1.\]

with $\left(c^\prime, d^\prime\right)$ in $\mathcal{Q}_{2k}$. We may now rewrite our given condition 
\[a + b \equiv c + d \pmod{2k}\]
as 
\[2a^\prime + 2b^\prime \equiv 2c^\prime + 2d^\prime \pmod{2k}.\]

Because $\gcd(2,2k) = 2$, we may divide both sides by $2$ only if we divide the modular base by $2$ as well:
\[ a^\prime + b^\prime \equiv c^\prime + d^\prime \pmod{k}. \]

By the inductive hypothesis, $a^\prime + b^\prime = c^\prime + d^\prime$. Therefore, $a + b = c + d$.

Thus, we are done by induction.
\end{enumerate}
\end{proof}

\begin{corollary}\label{cor:trunkremainders}
Trunk positions in $\mathcal{Q}_k$ have distinct remainders modulo $k$.
\end{corollary}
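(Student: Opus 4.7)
The plan is to derive this corollary as an almost immediate consequence of Lemma~\ref{2HeapTypeII}. Concretely, any two trunk positions have the form $(i,i)$ and $(j,j)$ with heap-sums $2i$ and $2j$. Suppose, for contradiction, two distinct trunk positions shared a remainder modulo $k$; then $2i \equiv 2j \pmod{k}$, so Lemma~\ref{2HeapTypeII} (applied to the two potential positions $(i,i)$ and $(j,j)$) would upgrade this congruence to an equality $2i = 2j$, forcing $i = j$ and contradicting distinctness.

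The only point that needs a sentence of care is the range of $i$. When $k$ is odd, $0 \le i < k$, and when $k$ is even, $0 \le i < k/2$, so in both cases $2i$ need not itself lie in $[0,k)$; but that is irrelevant, because Lemma~\ref{2HeapTypeII} delivers equality of the sums without any range assumption beyond both positions being in $\mathcal{Q}_k$. Hence $i = j$, and the trunk positions produce pairwise distinct residues modulo $k$.

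There is no real obstacle here: Lemma~\ref{2HeapTypeII} does all the work, and the corollary is a one-line specialization to trunk positions. I would therefore write the proof as a three- or four-line deduction that invokes the lemma directly, with no induction or case analysis needed.
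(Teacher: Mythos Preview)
Your proposal is correct and matches the paper's intended approach: the corollary is stated immediately after Lemma~\ref{2HeapTypeII} with no separate proof precisely because it is the one-line specialization you describe. Your observation that the range of $i$ is irrelevant since Lemma~\ref{2HeapTypeII} delivers equality of heap-sums directly is accurate, and the deduction $2i=2j \Rightarrow i=j$ completes the argument.
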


We are now ready to prove that every position not in $\mathcal{Q}_m$ has a move to a position in $\mathcal{Q}_m$ in the game of $m$-Modular Nim.

\begin{lemma}\label{MovesFromNotQtoQ}Every position not in $\mathcal{Q}_m$ has a move to an element of $\mathcal{Q}_m$ in the game of $m$-modular Nim.\end{lemma}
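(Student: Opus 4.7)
The plan is to do a case analysis on $(a, b) \notin \mathcal{Q}_m$. By the coordinate-swap symmetry of $\mathcal{Q}_m$ (immediate from the recursive definitions of $\mathcal{A}_m$ and $\mathcal{B}_m$), I may assume $a \leq b$. Let $M = m(\nu_2(m)/2 + 1)$, the strict upper bound on coordinates of elements of $\mathcal{Q}_m$ given by Lemma~\ref{2HeapPPositionCount}, and let $f$ denote the partial involution on $\{0, 1, \ldots, M-1\}$ defined by $(x, f(x)) \in \mathcal{Q}_m$; it is well-defined and injective by Lemmas~\ref{2HeapPPositionCount} and~\ref{2HeapTypeI}.

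If $a \geq M$, then $(a, b) \succ \mathcal{Q}_m$, so by Lemma~\ref{DominateTypeII} it suffices to find $(c, d) \in \mathcal{Q}_m$ with $c + d \equiv a + b \pmod m$. I will show by induction on $\nu_2(m)$ that every residue modulo $m$ arises as such a sum: the odd base case is immediate from Corollary~\ref{cor:trunkremainders} together with $\gcd(2, m) = 1$, and for even $m = 2k$ the trunk sums $0, 2, \ldots, 2(k-1)$ cover the even residues while the branch sums $2(a' + b') + m + 1$ with $(a', b') \in \mathcal{Q}_{m/2}$ cover the odd residues by the inductive hypothesis applied modulo $m/2$.

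If $a < M$, then $f(a)$ is defined. If $f(a) \leq b$ (which includes the case $b \geq M$, since then $f(a) < M \leq b$), then $f(a) < b$ because $(a, b) \notin \mathcal{Q}_m$, and reducing the second heap to $f(a)$ is the required Type~I move; otherwise $b < M$, so $f(b)$ is defined, and if $f(b) < a$ the symmetric Type~I move on the first heap works. This leaves the hard sub-case $a \leq b < M$ with $f(a) > b$ and $f(b) > a$ simultaneously.

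In the hard sub-case I plan to construct a Type~II move. Since the trunk indices $\{0, 1, \ldots, m/2 - 1\}$ are fixed points of $f$, the inequality $f(a) > b \geq a$ forces $a \geq m/2$, and symmetrically $b \geq m/2$, so $a + b \geq m$. Let $s$ be the common coordinate sum (Lemma~\ref{2HeapTypeII}) of the members of $\mathcal{Q}_m$ whose sum is $\equiv a + b \pmod m$. I plan to show that $s \leq a + b - m$ and that some sum-$s$ representative in $\mathcal{Q}_m$ is dominated by $(a, b)$; Lemma~\ref{DominateTypeII} then yields the Type~II move. For a trunk representative $(s/2, s/2)$, dominance is immediate since $s/2 < m/2 \leq a$. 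The main obstacle is the branch-representative case, which I expect to handle by induction on $\nu_2(m)$, unfolding the recursive description of $\mathcal{B}_m$ and translating the hard-sub-case hypotheses $f(a) > b$ and $f(b) > a$ into corresponding inequalities inside $\mathcal{Q}_{m/2}$.
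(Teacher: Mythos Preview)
Your plan is sound, and once the hard sub-case is fleshed out it yields a correct proof, but the organization differs from the paper's. The paper inducts on $\nu_2(m)$ from the outset with a coarser four-way split keyed only on whether $a<m/2$ and on the parity of $a+b$; it never isolates the threshold $M$ or the partner map $f$. Its Case~1 is just the Type~I move to the trunk point $(a,a)$ (not to a general $(a,f(a))$), its Case~2 is exactly your even-sum trunk-target argument, and its Cases~3--4 handle all of $a,b\ge m/2$ with $a+b$ odd by setting $(a',b')=\bigl((a-m/2)/2,\,(b-m/2-1)/2\bigr)$ (or its mirror), applying the inductive hypothesis in $(m/2)$-Modular Nim, and scaling the resulting move back up via $x\mapsto 2x+m/2$. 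Your hard sub-case, once unfolded, collapses to precisely this: with $a\equiv m/2$ and $b\equiv m/2+1\pmod 2$, the hypotheses $f(a)>b$ and $f(b)>a$ translate to $f_{m/2}(a')>b'$ and $f_{m/2}(b')>a'$, putting $(a',b')$ in the hard sub-case one level down, and the lifted Type~II target has sum $2s'+m+1\le a+b-m$, which is also how your unproved inequality $s\le a+b-m$ actually gets established in the odd-sum branch. What your decomposition buys is that the region $a\ge M$ and the Type~I moves to branch points are handled without induction; what the paper's buys is a shorter inductive step that carries neither $M$ nor $f$.
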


\begin{proof}
Let there be a position $(a,b) \notin \mathcal{Q}_k$. We will as usual induct on the $2$-adic order. Our base case is when $k$ is odd. By Lemma~\ref{Odd2HeapPPosition}, $\mathcal{Q}_k$ is the complete set of P-positions in $k$-Modular Nim. Therefore, all other positions are N-positions. From the definition of an N-position, every N-position has a move to a P-position. 

For the inductive step, assume that this lemma is true for $\mathcal{Q}_k$ in the game of $k$-Modular Nim. We want to show the corresponding statement for $\mathcal{Q}_{2k}$ in the game of $2k$-Modular Nim. Again we will use casework. Assume without loss of generality that $a \leq b$.

\begin{enumerate}
\item Suppose that $0 \leq a < k$. Then, we have $(a,a) \in \mathcal{Q}_{2k}$. Therefore, there is a Type I move from $(a,b)$ to $(a,a)$, a member of $\mathcal{Q}_{2k}$.
\item Suppose that $k \leq a \leq b$ and $a \equiv b \pmod{2}$. The heap-sums of trunk positions in $\mathcal{Q}_{2k}$ form a set of $k$ even integers. By Corollary~\ref{cor:trunkremainders} there exists a trunk position with any given even remainder modulo $2k$. Therefore, there exists a trunk position with the same even remainder modulo $2k$ as $a+b$. As $(a,b)$ dominates $(k,k)$, it also strictly dominates any trunk position. Therefore by Lemma \ref{DominateTypeII}, there exists a Type II move from $(a,b)$ to the trunk position with the same remainder.

\item Suppose that $k \leq a \leq b$ and $a \equiv k \pmod{2}$ while $b \equiv k + 1 \pmod{2}$. Consider the position:
\[ \left(a^\prime, b^\prime\right) = \left(\frac{a - k}{2}, \frac{b - k - 1}{2} \right) \]

Note that $\left(a^\prime, b^\prime\right)$ is not an element of $\mathcal{Q}_k$, as $\left(2a^\prime + k, 2b^\prime + k + 1\right) = (a,b)$ is not an element of $\mathcal{Q}_{2k}$. Therefore, by the inductive hypothesis there exists a position $\left(q_1,q_2\right) \in \mathcal{Q}_{k}$ that can be reached with a Type II move from $\left(a^\prime, b^\prime\right)$ in the game of $k$-Modular Nim. 

Thus, there exists a Type II move from $(a,b) = \left(2a^\prime + k, 2b^\prime + k + 1\right)$ to $\left(2q_1 + k,2q_2 + k + 1\right)$ in the game of $2k$-Modular Nim. Note that $\left(2q_1 + k,2q_2 + k + 1\right)$ is an element of $\mathcal{Q}_{2k}$. This finishes this case.

\item Suppose that $k \leq a \leq b$ and $a \equiv k + 1\pmod{2}$ while $b \equiv k  \pmod{2}$. We are done by a similar symmetric argument to the previous case.
\end{enumerate}
This completes the induction. \end{proof}

These results enable us to determine a recursive definition of P-positions in $m$-Modular Nim with $2$ heaps.

\begin{theorem}If $\mathcal{P}_m$ is the set of P-positions for $m$-Modular Nim with $2$ heaps, then $\mathcal{P}_m = \mathcal{Q}_m$. \end{theorem}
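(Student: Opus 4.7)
The plan is to apply the partition theorem from the preliminaries with $\mathcal{A} = \mathcal{Q}_m$ and $\mathcal{B}$ equal to the complement of $\mathcal{Q}_m$ in the set of positions. Three conditions must be checked: that every terminal position lies in $\mathcal{Q}_m$, that no legal move connects two elements of $\mathcal{Q}_m$, and that every position outside $\mathcal{Q}_m$ has a move into $\mathcal{Q}_m$. The third is already Lemma~\ref{MovesFromNotQtoQ}, and the first is immediate because the only terminal position is $(0,0)$, which belongs to $\mathcal{A}_m \subseteq \mathcal{Q}_m$ for every $m$. So the real content of the proof is showing that no move exists between two distinct members of $\mathcal{Q}_m$.

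To rule out Type I moves, I would argue that such a move changes exactly one coordinate and leaves the other unchanged. Thus if a Type I move joined $(a,b), (a',b') \in \mathcal{Q}_m$, the two positions would share at least one coordinate; Lemma~\ref{2HeapTypeI} (applied to the shared coordinate, possibly after swapping heaps) then forces $(a,b) = (a',b')$, contradicting that a move is a strict change.

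To rule out Type II moves, I would suppose for contradiction that such a move connects distinct $(a,b), (c,d) \in \mathcal{Q}_m$. By Lemma~\ref{DominateTypeII}, $(a,b) \succ (c,d)$ and $a+b \equiv c+d \pmod{m}$; the congruence together with Lemma~\ref{2HeapTypeII} gives $a+b = c+d$. On the other hand, strict domination forces $a+b > c+d$, which is the contradiction.

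Having verified all three partition conditions, the partition theorem applies and yields $\mathcal{P}_m = \mathcal{Q}_m$. I do not expect any substantive obstacle: every technical step has already been encapsulated by the preceding lemmas, so the proof is essentially a short bookkeeping assembly. The only care needed is in the Type I case, to note that a move between two-heap positions preserves one of the two coordinates and hence really does invoke Lemma~\ref{2HeapTypeI} (possibly after using the symmetry of $\mathcal{Q}_m$ under swapping heaps, which is evident from the definitions of $\mathcal{A}_m$ and $\mathcal{B}_m$).
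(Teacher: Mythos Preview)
Your proposal is correct and follows essentially the same route as the paper: rule out Type~I moves via Lemma~\ref{2HeapTypeI}, rule out Type~II moves via Lemmas~\ref{DominateTypeII} and~\ref{2HeapTypeII}, and invoke Lemma~\ref{MovesFromNotQtoQ} for the remaining direction. Your explicit mention of the terminal-position check and of the heap-swap symmetry needed to apply Lemma~\ref{2HeapTypeI} when the \emph{second} coordinate is shared are minor clarifications the paper leaves implicit.
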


\begin{proof}We must first prove that no move exists between any two elements of $\mathcal{Q}_m$. Let $(a,b)$ and $(c,d)$ be distinct elements of $\mathcal{Q}_m$. If a Type I move exists between $(a,b)$ and $(c,d)$, we may assume without loss of generality that $a = c$. Thus $(a,b)$ and $(a,d)$ are distinct elements of $\mathcal{Q}_m$. However, from Lemma \ref{2HeapTypeI}, then we have that $b = d$ which contradicts distinctness. 

Now suppose that a Type II move exists. By Lemma \ref{DominateTypeII}, we have that $a + b \equiv c + d \pmod{m}$. However, from Lemma \ref{2HeapTypeII}, we have that $a + b = c + d$. However, every move strictly decreases heap-sum, so no such move exists.

Moreover by Lemma \ref{MovesFromNotQtoQ}, every position not in $\mathcal{Q}_m$ has a move to an element of $\mathcal{Q}_m$. Therefore, $\mathcal{P}_m = \mathcal{Q}_m$. \end{proof}

We are now ready to describe the set of P-positions explicitly.

\section{Explicit Description of P-positions}\label{sec:explicit}

Let $m = k \cdot 2^n$ where $k$ is odd. The P-positions in the game of $m$-Modular Nim are built recursively from the positions of $k$-Modular Nim, by using the recursion described in Section~\ref{sec:potentialpositions}. The recursion is used $n$ times. 

The recursion procedure is similar for all $m$ with the same 2-adic order. Figure~\ref{fig:8mod} depicts P-positions for 8-Modular Nim to emphasize the branching.

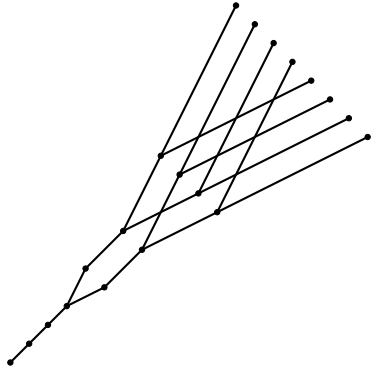
\begin{figure}[htbp]
\begin{center}
\begin{tikzpicture}[scale = 0.25]
\scriptsize
\filldraw[black] (0,0) circle (4pt);
\filldraw[black] (1,1) circle (4pt);
\filldraw[black] (2,2) circle (4pt);
\draw[black, thick] (0,0) -- (3,3);
{
\filldraw[black] (3,3) circle (4pt);
\filldraw[black] (4,5) circle (4pt);
\filldraw[black] (5,4) circle (4pt);
\draw[black, thick] (3,3) -- (4,5);
\draw[black, thick] (3,3) -- (5,4);

\filldraw[black] (6,7) circle (4pt);
\filldraw[black] (7,6) circle (4pt);

\filldraw[black] (8,11) circle (4pt);
\filldraw[black] (9,10) circle (4pt);
\filldraw[black] (10,9) circle (4pt);
\filldraw[black] (11,8) circle (4pt);

\filldraw[black] (12,19) circle (4pt);
\filldraw[black] (13,18) circle (4pt);
\filldraw[black] (14,17) circle (4pt);
\filldraw[black] (15,16) circle (4pt);

\filldraw[black] (19,12) circle (4pt);
\filldraw[black] (18,13) circle (4pt);
\filldraw[black] (17,14) circle (4pt);
\filldraw[black] (16,15) circle (4pt);

\draw[black, thick] (5,4) -- (7,6);
\draw[black, thick] (4,5) -- (6,7);

\draw[black, thick] (7,6) -- (11,8);
\draw[black, thick] (7,6) -- (9,10);
\draw[black, thick] (6,7) -- (8,11);
\draw[black, thick] (6,7) -- (10,9);

\draw[black, thick] (8,11) -- (12,19);
\draw[black, thick] (8,11) -- (16,15);
\draw[black, thick] (11,8) -- (19,12);
\draw[black, thick] (11,8) -- (15,16);
\draw[black, thick] (9,10) -- (13,18);
\draw[black, thick] (9,10) -- (17,14);
\draw[black, thick] (10,9) -- (18,13);
\draw[black, thick] (10,9) -- (14,17);
}
\end{tikzpicture}

\end{center}
\caption {P-positions for $8$-Modular Nim.}
\label{fig:8mod}
\end{figure}

\begin{definition}\label{Branch Level}The \textit{$i$-level branch} is the set of P-positions that are generated from the trunk of $\mathcal{Q}_{k\cdot 2^{n-i}}$ using $i$ splitting procedures as described in Section~\ref{sec:potentialpositions}.\end{definition}

The P-positions in the $i$-level branch are of the form 
\[(2^ia + f_im + b_1, 2^ia + f_im + b_2),\]
where $f_i$ is the coefficient by which $m$ is multiplied after completing $i$ splitting procedures. Moreover, we have $0 \leq a < k \cdot 2^{n-i-1}$, for $i < n$, and $0 \leq a < k$, for $i=n$.

Let us trace through recursion and find $f_i$. The $i$-level branch is recursively generated from the trunk of $\mathcal{Q}_{\frac{m}{2^i}}$. After the first recursion the coefficient is $\frac{m}{2^i}$. After the next recursion it is \[2\cdot \frac{m}{2^i} + \frac{m}{2^{i-1}}=2 \cdot \frac{m}{2^{i-1}}.\] Repeating again we get \[2\cdot \frac{m}{2^{i-2}} + \frac{m}{2^{i-2}}=3 \cdot \frac{m}{2^{i-2}}.\] As we continue, we see that $f_i = \frac{i}{2}$. 

We also can calculate that the largest possible value for $b_1$ is $2^i-1$, and the smallest is 0. The same is true for $b_2$. Therefore, the following lemma follows.

\begin{lemma}
The P-positions within the $i$-level branch correspond to all possible values for $0 \leq b_1 < 2^i$ and $b_2 = 2^i-1-b_1$.
\end{lemma}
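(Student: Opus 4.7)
The plan is to prove the lemma by induction on $i$, tracking how the pair $(b_1, b_2)$ evolves under each successive splitting. Since the paper has already identified that an $i$-level branch position has the form $(2^i a + f_i m + b_1, 2^i a + f_i m + b_2)$, we only need to show that the pairs $(b_1, b_2)$ produced by the $2^i$ possible sequences of splitting choices exhaust the set $\{(b, 2^i - 1 - b) : 0 \leq b < 2^i\}$.

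To set this up, I would write the position after $j$ splittings as $(2^j a + C_j + B_1^{(j)}, 2^j a + C_j + B_2^{(j)})$, where $C_j$ is the additive constant already analyzed in the paragraph computing $f_i$, and $B_1^{(j)}, B_2^{(j)}$ are the residual ``bit offsets.'' Unwrapping the splitting formula $(a', b') \mapsto (2a' + M/2 + c, 2b' + M/2 + (1-c))$ applied to a $(j-1)$-level position, the doubling of the coordinate absorbs into $2^j a$ and $2 C_{j-1}$, the additive $M/2$ absorbs into $C_j = 2 C_{j-1} + M/2$, and the only part that affects the offsets is the choice bit. Thus splitting acts on the offsets by
\[(B_1^{(j-1)}, B_2^{(j-1)}) \longmapsto (2 B_1^{(j-1)} + c_j,\; 2 B_2^{(j-1)} + (1 - c_j)), \qquad c_j \in \{0, 1\}.\]

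Given this formula, the induction is immediate. The base case $j = 0$ is a trunk position, where $B_1^{(0)} = B_2^{(0)} = 0$, so $B_1^{(0)} + B_2^{(0)} = 2^0 - 1$ and $B_1^{(0)}$ achieves its one allowed value. For the inductive step, assuming $B_1^{(j-1)} + B_2^{(j-1)} = 2^{j-1} - 1$ and that $B_1^{(j-1)}$ ranges over $\{0, 1, \ldots, 2^{j-1} - 1\}$, the update formula gives $B_1^{(j)} + B_2^{(j)} = 2(2^{j-1} - 1) + 1 = 2^j - 1$, preserving the complementary relation; and $B_1^{(j)} = 2 B_1^{(j-1)} + c_j$ is precisely the binary integer whose digits are $c_1 c_2 \cdots c_j$, so as the choices vary it sweeps out every value in $\{0, 1, \ldots, 2^j - 1\}$ exactly once. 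Setting $j = i$ yields the lemma.

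There is no real obstacle; the only care needed is verifying that the splitting operation really does descend to the claimed rule on the offsets, which amounts to checking that $C_j = 2 C_{j-1} + M^{(j)}/2$ absorbs all of the shift contribution and leaves only $c_j$ and $1 - c_j$ affecting the two offsets, respectively. Once that bookkeeping is in place, the induction is a one-line calculation.
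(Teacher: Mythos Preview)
Your proof is correct but takes a genuinely different route from the paper's. You argue constructively: by induction on the number of splittings, you show that the offset update rule is $(B_1, B_2) \mapsto (2B_1 + c, 2B_2 + (1-c))$, so after $i$ steps $B_1$ is literally the integer with binary digits $c_1 c_2 \cdots c_i$ and $B_1 + B_2 = 2^i - 1$. The paper instead gives a brief counting argument: positions with the same $a$ in the $i$-level branch all have the same heap-sum (so $b_1 + b_2$ is constant), they have pairwise distinct first coordinates by Lemma~\ref{2HeapTypeI}, and there are $2^i$ of them; combined with the observation (made just before the lemma) that $0 \leq b_1 \leq 2^i - 1$, pigeonhole forces every value of $b_1$ in that range to be hit exactly once.

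Your approach is more self-contained and makes the bijection with binary choice strings explicit, which is arguably cleaner and does not lean on the earlier structural Lemma~\ref{2HeapTypeI}. The paper's approach is shorter because it cashes in results already proved, but it leaves the reader to assemble the pieces. Either is fine.
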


\begin{proof}
The P-positions corresponding to the same $a$ within the $i$-level branch have the same heap-sum. They all have different first coordinates and there are $2^i$ of them.\end{proof}

Combining all the results together, we deduce the following theorem.

\begin{theorem}
The P-positions of $m$-Modular Nim are of the form
\[\left(2^ia + \frac{im}{2} + b, 2^ia + \frac{im}{2} + 2^i-1-b\right),\]
where $0 \leq i \leq n$ and $0 \leq b < 2^i$. In addition, $0 \leq a < k \cdot 2^{n-i-1}$, for $i < n$, and $0 \leq a < k$, for $i = n$.
\end{theorem}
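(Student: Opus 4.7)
The plan is to assemble the structural results already proved into the explicit closed form claimed. I would first observe that the recursive construction of $\mathcal{Q}_m$ partitions the P-positions into the $i$-level branches for $0 \leq i \leq n$: at each layer of the recursion $\mathcal{Q}_{m/2^{j}}$, the trunk positions give rise, after $j$ further splittings, to the $j$-level branches of $\mathcal{Q}_m$, while the branch positions descend into the next recursive layer. After $n$ iterations one reaches the odd base case $\mathcal{Q}_k$, which consists entirely of trunk positions and provides the $n$-level branches. Thus every P-position of $m$-Modular Nim lies in exactly one $i$-level branch for a unique $0 \leq i \leq n$.

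Next, for each such $i$, I would apply the form $(2^i a + f_i m + b_1,\, 2^i a + f_i m + b_2)$ developed in the preceding discussion, together with the previous lemma, which forces $b_1 = b$ and $b_2 = 2^i - 1 - b$ for some $0 \leq b < 2^i$. The recursion trace given before that lemma already established $f_i = i/2$, hence $f_i m = im/2$, and substitution yields the coordinates in the theorem's stated form.

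Finally, I would verify the range of $a$ by tracing back to the definition of $\mathcal{A}_{m/2^i}$, since the $i$-level branch is generated from the trunk positions $(a,a) \in \mathcal{A}_{m/2^i}$. When $i < n$, the modulus $m/2^i = k \cdot 2^{n-i}$ is even, so the defining inequality for even-indexed $\mathcal{A}$ gives $0 \leq a < (m/2^i)/2 = k \cdot 2^{n-i-1}$. When $i = n$, the modulus is the odd number $k$, so the odd case of the definition gives $0 \leq a < k$. Both ranges match the theorem.

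The hardest part will be the bookkeeping rather than any mathematical content: the substantive work on why $f_i = i/2$ and on why the allowed $(b_1,b_2)$ pairs take the claimed symmetric form has already been done in the preceding lemmas, so I expect no genuine new obstacle beyond keeping the indexing clean and distinguishing the $i < n$ and $i = n$ cases, which arises solely from the asymmetry between the even recursive case and the odd base case in the definition of $\mathcal{A}_m$.
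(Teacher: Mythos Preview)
Your proposal is correct and follows essentially the same approach as the paper: the theorem is stated immediately after the preceding lemma with the phrase ``Combining all the results together, we deduce the following theorem,'' so the paper, like you, simply assembles the form $(2^i a + f_i m + b_1, 2^i a + f_i m + b_2)$, the computation $f_i = i/2$, and the lemma on $(b_1,b_2)$ into the closed form. Your explicit justification of the range of $a$ via the definition of $\mathcal{A}_{m/2^i}$ is slightly more detailed than the paper's bare assertion, but the structure is the same.
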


Note that 0-level branch corresponds to the trunk and the formula correctly produces positions of the form:
\[(a,a)\]
for $0 \leq a < k \cdot 2^n = m$. Similarly, the 1-level branch positions are of the form: 
\[\left(2a + \frac{m}{2} + 1, 2a + \frac{m}{2}\right) \text{ and } \left(2a + \frac{m}{2}, 2a + \frac{m}{2}+1\right),\]
for $0 \leq a < k \cdot 2^{n-1}$. 

We see that in the list of all P-positions the range of the coordinates with the higher heap-sum is above the range of the coordinates with the lower heap-sum. 

\begin{corollary}
Any P-position with a greater sum dominates every P-position with a lower sum.
\end{corollary}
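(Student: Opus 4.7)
The plan is to exploit the explicit formula by grouping the P-positions into \emph{blocks} indexed by $(i, a)$. For fixed such parameters, the $2^i$ P-positions corresponding to $b \in \{0, 1, \ldots, 2^i - 1\}$ all share the heap-sum
\[
S_{i,a} = 2^{i+1}a + im + 2^i - 1,
\]
and both their first coordinates and their second coordinates fill the integer interval $[L_{i,a}, R_{i,a}]$, where $L_{i,a} = 2^i a + \tfrac{im}{2}$ and $R_{i,a} = L_{i,a} + 2^i - 1$. Writing any P-position $P$ as lying in block $(i,a)$ and $P'$ in block $(i', a')$, the corollary reduces to the block-level implication
\[
S_{i,a} > S_{i',a'} \quad\Longrightarrow\quad L_{i,a} \geq R_{i',a'},
\]
because then both coordinates of $P$ are at least $L_{i,a}$ and both coordinates of $P'$ are at most $R_{i',a'}$.

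I would prove this implication by a short case analysis on the relation between $i$ and $i'$. When $i < i'$, the case is vacuous: using the ranges for $a$ given in the theorem, $\max_a S_{i,a} = (i+1)m - 2^i - 1$ for $i < n$, which is strictly less than $S_{i+1,0} = (i+1)m + 2^{i+1} - 1$, and iterating this inequality shows $S_{i,a} < S_{i',a'}$ for any $i' > i$. When $i = i'$, the hypothesis forces $a \geq a' + 1$, so $L_{i,a} - R_{i,a'} = 2^i(a - a') - (2^i - 1) \geq 1$. When $i > i'$ (which forces $i' < n$), taking $a'$ maximal in its level-$i'$ range and $b' = 2^{i'} - 1$ gives $R_{i',a'} \leq \tfrac{(i'+1)m}{2} - 1$, while $L_{i,a} \geq \tfrac{im}{2} \geq \tfrac{(i'+1)m}{2}$, yielding the desired strict inequality.

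The main obstacle is purely bookkeeping: the theorem distinguishes the range of $a$ for $i < n$ and $i = n$, so one must verify the level-separation inequality and the coordinate bounds against both regimes, and confirm that when taking $a'$ maximal, the correct formula is used. Once the block-level implication is secured, both coordinate inequalities follow immediately, yielding $P \succeq P'$ as claimed (in fact strictly whenever the heap-sums differ).
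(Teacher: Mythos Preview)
Your argument is correct and follows essentially the same route as the paper: the paper simply observes that, in the explicit description, the coordinate range associated to a higher heap-sum block lies entirely above that of a lower heap-sum block, which is exactly the implication $S_{i,a} > S_{i',a'} \Rightarrow L_{i,a} > R_{i',a'}$ that you verify case by case. Your write-up just makes that one-line observation rigorous, including the level-boundary bookkeeping.
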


In addition, on level $i$ the first coordinate ranges from $f_im=i\frac{m}{2}$ to \[f_im+\frac{m}{2}-1=(i+1)\frac{m}{2}-1 = f_{i+1}m- 1.\]
This further confirms Lemma~\ref{2HeapPPositionCount} in that the numbers in the first and second coordinates of P-positions are consecutive. We can also see that the largest number in the range is $2^nk+n\frac{m}{2}-1$, which, not surprisingly, matches Corollary~\ref{thm:totalcount}.

The description above allows us to calculate the second coordinate of the P-position, given the first coordinate. 

\begin{lemma}
Suppose the first coordinate of an $m$-Modular Nim P-position is $x$, then the second coordinate is
\[ 2^{\left \lfloor \frac{2x}{m}\right  \rfloor+1} \left \lfloor \frac{x-\left \lfloor \frac{2x}{m}\right  \rfloor \frac{m}{2}}{2^i}\right \rfloor + \left \lfloor \frac{2x}{m}\right \rfloor m + 2^{\left \lfloor \frac{2x}{m}\right \rfloor}-1 -x.\]
\end{lemma}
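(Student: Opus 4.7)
The plan is to invert the explicit parametrization established in the preceding theorem. A P-position on the $i$-level branch has the form
\[(2^i a + im/2 + b,\; 2^i a + im/2 + 2^i - 1 - b), \qquad 0 \le b < 2^i,\]
so once the triple $(i,a,b)$ is recovered from the first coordinate $x$, reading off the second coordinate is mechanical.

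First I would identify the level $i$. The paragraph following the theorem observes that on level $i$ the first coordinate ranges over the consecutive integers in $[\,im/2,\,(i+1)m/2\,)$, and that these contiguous half-open blocks of length $m/2$ tile the full range of first coordinates that actually occur (in agreement with Lemma~\ref{2HeapPPositionCount} and Corollary~\ref{thm:totalcount}). Therefore $x$ lies in exactly one such block, and its index is $i = \lfloor 2x/m \rfloor$.

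Next, with $i$ in hand, the relation $x - im/2 = 2^i a + b$ with $0 \le b < 2^i$ is ordinary division with remainder, giving
\[a = \left\lfloor \frac{x - im/2}{2^i}\right\rfloor, \qquad b = x - im/2 - 2^i a.\]
Substituting these into $2^i a + im/2 + 2^i - 1 - b$, the $im/2$ terms add and the $-b$ cancels the residual $x$-term, so the second coordinate simplifies to
\[2^{i+1}\left\lfloor \frac{x - im/2}{2^i}\right\rfloor + im + 2^i - 1 - x,\]
and replacing $i$ by $\lfloor 2x/m \rfloor$ yields the stated formula.

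The only bookkeeping to check is that the blocks $[\,im/2,\,(i+1)m/2\,)$ really do partition the set of occurring first coordinates, so that the level $i$ is unambiguously read off from $x$ as $\lfloor 2x/m \rfloor$; this is the main, though mild, obstacle, and it follows directly from the fact that the blocks are contiguous intervals of length $m/2$ whose union covers the full range of first coordinates identified in Lemma~\ref{2HeapPPositionCount} and counted in Corollary~\ref{thm:totalcount}. Once that is in place, the proof reduces to the algebraic substitution above.
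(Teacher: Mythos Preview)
Your proposal is correct and follows essentially the same approach as the paper: recover the level $i$ from $x$ via $i=\lfloor 2x/m\rfloor$, recover $a$ by integer division of $x-im/2$ by $2^i$, and substitute into the parametrization to obtain $2^{i+1}a+im+2^i-1-x$. You supply more justification (the division-with-remainder step and the partition-of-blocks check) than the paper's terse version, but the argument is the same.
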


\begin{proof}
Suppose the first coordinate is $x$, then the level $i$ is defined as $\left\lfloor \frac{2x}{m}\right\rfloor$. After that we can calculate $a$ in the formula as  $\left\lfloor \frac{x-im/2}{2^i}\right\rfloor$. Therefore, the second coordinate is:
\[ 2^{i+1}a + im + 2^i-1 -x.\]\end{proof}

\section{$m$-Modular Nim for Odd $m$ and any Number of Heaps}\label{sec:manyheaps}

We will now generalize the results from  Theorem~\ref{Odd2HeapPPosition} to any number of heaps.

\begin{theorem}Let $a = (a_1,a_2,\ldots,a_n)$ be a position in $m$-Modular Nim with $n$ heaps, where $m$ is odd. It is a P-position if and only if:

\begin{enumerate}
\item $|a| < 2m$
\item $\oplus_i a_i = 0$.
\end{enumerate}\end{theorem}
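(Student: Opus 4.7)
My plan is to apply the partition criterion from the preliminaries to the set $\mathcal{S} := \{a : |a| < 2m \text{ and } \bigoplus_i a_i = 0\}$. First I would check that no move connects two elements of $\mathcal{S}$: a Type I move alters exactly one heap and hence changes the XOR, so it exits $\mathcal{S}$; a Type II move from $a \in \mathcal{S}$ to $b \in \mathcal{S}$ would require $|a| \equiv |b| \pmod m$ with $|a| > |b|$, but XOR $=0$ forces both sums to be even, and since $\gcd(2,m)=1$ makes $k \mapsto 2k \bmod m$ a bijection on $\{0,\ldots,m-1\}$, each residue class modulo $m$ contains exactly one even integer in $[0,2m)$, forcing $|a| = |b|$ and contradicting the strict decrease.

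Next I would show every $a \notin \mathcal{S}$ admits a move into $\mathcal{S}$. The case $|a| < 2m$ with $\bigoplus_i a_i \neq 0$ is handled by Bouton's theorem: a Type I Nim move produces $b$ with XOR zero and $|b| < |a| < 2m$, so $b \in \mathcal{S}$. Henceforth assume $|a| \geq 2m$, and let $s^*$ denote the unique even integer in $[0, 2m)$ with $s^* \equiv |a| \pmod m$, guaranteed by the same odd-$m$ argument above.

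The engine of the construction is the sub-lemma: \emph{if $\bigoplus_i a_i = 0$ and $s$ is even with $0 \leq s \leq |a|$, then there exists $b \preceq a$ with $\bigoplus_i b_i = 0$ and $|b| = s$.} I would prove this by induction on $|a|$. When $|a| > 0$, let $L$ be the lowest bit position set in some $a_i$; since the XOR vanishes at bit $L$, the count of heaps with bit $L$ set is even and positive, hence at least $2$. Decreasing each of two such heaps by $1$ flips exactly bits $0, \ldots, L$ in each, contributing $(2^{L+1} - 1) \oplus (2^{L+1} - 1) = 0$ to the XOR; the new position still has XOR $0$ and heap-sum $|a| - 2$, so iterating and invoking the inductive hypothesis produces the required $b$. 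Consequently, when $\bigoplus_i a_i = 0$ and $|a| \geq 2m$, applying the sub-lemma with $s = s^*$ yields $b \in \mathcal{S}$; since $|a| - s^*$ is a positive multiple of $m$, Lemma~\ref{DominateTypeII} furnishes the required Type II move.

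The main obstacle is the case $|a| \geq 2m$ with $X := \bigoplus_i a_i \neq 0$. Let $J$ be the nonempty set of heaps whose top-bit-of-$X$ is set, and for $j \in J$ set $d_j := a_j - (a_j \oplus X) > 0$; denote by $b^{(j)}$ the position obtained from $a$ by replacing its $j$-th coordinate with $a_j \oplus X$, so $b^{(j)}$ has XOR $0$ and heap-sum $|a| - d_j$. I plan a dichotomy. If some $j \in J$ satisfies $d_j \leq |a| - s^*$, then $|b^{(j)}| \geq s^*$, and the sub-lemma applied to $b^{(j)}$ produces $b \preceq b^{(j)} \preceq a$ with $\bigoplus_i b_i = 0$ and $|b| = s^*$; since $|a| - s^*$ is a positive multiple of $m$, Lemma~\ref{DominateTypeII} gives a Type II move from $a$ to $b \in \mathcal{S}$. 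If instead every $j \in J$ has $d_j > |a| - s^*$, pick any such $j$: then $|b^{(j)}| = |a| - d_j < s^* < 2m$, so $b^{(j)} \in \mathcal{S}$ and the Nim (Type I) move on heap $j$ lands in $\mathcal{S}$. Verifying this dichotomy, and in particular tracking the arithmetic that makes $|a|-|b|$ a multiple of $m$ in the first branch while forcing $|b^{(j)}| < 2m$ in the second, is the crucial closing step of the proof.
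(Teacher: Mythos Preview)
Your proof is correct and follows essentially the same strategy as the paper: verify the partition criterion, rule out Type~I moves via Bouton and Type~II moves via parity, and for positions outside $\mathcal{S}$ first pass to a Nim P-position and then slide down to the target even heap-sum below $2m$. Your sub-lemma makes rigorous what the paper asserts informally as ``there exists an optimal play in which only $1$ token is removed per turn,'' and your dichotomy on $d_j$ versus $|a|-s^*$ is an explicit form of the paper's split between the case where a single Nim move already lands below $2m$ and the case where a further Type~II reduction is required.
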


\begin{proof}
Let the specified set be $\mathcal{Q}$. We will first show that no move exists between any two positions in $\mathcal{Q}$. By Bouton's Theorem, no Type I move exists. 

Because the heap-sum of all members of $\mathcal{Q}$ is less than $2m$, any Type II move must subtract exactly $m$ tokens total. However, all members of $\mathcal{Q}$ have an even heap-sum as the bitwise XOR of their heap sizes is $0$. Because $m$ is odd, no two elements of $\mathcal{Q}$ can be connected by a Type II move.

We must now show that every position $p=(p_1,p_2,\ldots,p_n)$ not in $\mathcal{Q}$ has a move to an element of $\mathcal{Q}$. 

\begin{enumerate}
\item Suppose that $p$ is not a P-position in Nim and there is a Nim-move from $p$ to a position $p^\prime$ such that $|p^\prime| < 2m$. Then $p^\prime \in \mathcal{Q}$ and we have found our move.
\item Suppose that $p$ is not a P-position in Nim; and there is a Nim-move from $p$ to a position $q$ such that $|q| \geq 2m$. There exists a number $k$, such that $|p| -km < 2m$ and $|p| -km$ is even. In the game of Nim, there exists an optimal play in which only $1$ token is removed per turn. That means, for any P-position $q$, there exists a P-position dominated by $q$ for every even heap-sum less than $|q|$. In particular, there exists a P-position in Nim, $p^\prime$, such that $|p^\prime| = |p| -km$ and $q$ dominates $p^\prime$. Therefore, $p$ dominates $p^\prime$, and $p^\prime$ is reachable from $p$ by a Type II move.
\item Suppose that $p$ is a P-position in Nim, and $|p| \geq 2m$. A similar argument to the one above shows that there exists a P-position of Nim $p^\prime \in \mathcal{Q}$ that is reachable from $p$ via a Type II move.
\end{enumerate} \end{proof}

\section{Acknowledgements}
The authors are grateful to the MIT-PRIMES program for supporting this research and to Prof. Aviezri Fraenkel for helpful suggestions.

\end{document}